\newtheorem{prop}{Proposition}[section]
\newtheorem{thm}[prop]{Theorem}
\newtheorem{lem}[prop]{Lemma}
\theoremstyle{definition}
\newtheorem{rem}[prop]{Remark}
\newtheorem*{ack}{Acknowledgement}
\def\co{\colon\thinspace}
\newcommand{\C}{\mathbb C}
\newcommand{\rmd}{\mathrm d}
\newcommand{\D}{\mathbb D}
\newcommand{\rme}{\mathrm e}
\newcommand{\Hp}{\mathbb H}
\newcommand{\rmi}{\mathrm i}
\newcommand{\N}{\mathbb N}
\newcommand{\R}{\mathbb R}
\newcommand{\lra}{\longrightarrow}
\newcommand{\ra}{\rightarrow}
\DeclareMathOperator{\dist}{\mathrm{dist}}
\DeclareMathOperator{\length}{\mathrm{length}}
\DeclareMathOperator{\loc}{\mathrm{loc}}
\begin{document}

\author{Urs Frauenfelder}
\address{Institut f\"ur Mathematik, Universit\"at Augsburg}
\email{Urs.Frauenfelder@math.uni-augsburg.de}
\author{Kai Zehmisch}
\address{Mathematisches Institut, Westf\"alische Wilhelms-Universit\"at M\"unster}
\email{Kai.Zehmisch@uni-muenster.de}

\title[Gromov compactness]
{Gromov compactness for holomorphic
  discs with totally real boundary conditions}

\date{16.04.2015}

\begin{abstract}
  We prove
  that a sequence of holomorphic discs
  with totally real boundary conditions
  has a subsequence
  that Gromov converges
  to a stable holomorphic map
  of genus zero with connected boundary
  provided that the sequence is bounded
  and has bounded energy.
\end{abstract}

\subjclass[2010]{53D12, 53D45, 58J05}
\thanks{UF partially supported by the Basic Research fund
  2013004879 of the Korean government.
  ZK is partially supported by DFG grant ZE 992/1-1.}

\maketitle


\section{Introduction\label{sec:intro}}

The study of Lagrangian submanifolds in symplectic topology
uses holomorphic discs in an essential way.
A natural question is the convergence of
sequences of parametrized holomorphic discs
with boundary values in a given Lagrangian submanifold.
By the elliptic nature of the Cauchy-Riemann equation
a uniform bound with respect to the $C^1$-topology
implies convergence of a subsequence including
all partial derivatives.
A $C^0$-bound usually is given by
the geometric problem under consideration
so that the actual question
is about a bound on the gradient.

In general there is no gradient bound.
This is because the group of holomorphic
automorphisms of the unit disc,
which acts on all holomorphic discs by
reparametrization,
is not compact.
On the other hand, the symplectic energy,
which is the integral of the symplectic form
over the holomorphic disc,
is preserved under this action.
This implies uniform $L^2$-bounds
on the gradient provided the holomorphic
discs represent the same relative homotopy class.
In fact, as observed by Sacks-Uhlenbeck
\cite{saul81} and Gromov \cite{grom85},
locally up to reparametrisations
one obtains $C^{\infty}$-convergence
using the so-called bubbling off analysis.
The limit object consists of finitely many
holomorphic discs and spheres
glued along finitely many points
so that the relative homotopy class is preserved.
The so-called Gromov convergence is described in
\cite{fraun08}
and content of the present article.

Holomorphic discs appear in contact geometry
in conjunction with questions about
fillability as in
\cite{bour09,elia90,elho94,elpo96,
  geizeh10,geizeh13,geizeh14a,geizeh14b,gnw14,grom85,
  hind97,maniewen13,nieder06,
  ye98,zehm03}
or the strong Weinstein conjecture
\cite{albhof09,geizeh13,hofe93,niedrech11}.
One considers contact manifolds
that are the boundary of a symplectic manifold.
The boundary condition for the holomorphic discs
is posed with respect to a totally real
submanifold that is not Lagrangian.
Because of the integrability
of the characteristic distribution
in the complement of the singular set
uniform gradient bounds
along the boundary curve of the holomorphic discs
can be obtained by geometric arguments.
In order to unify the argumentation and
for a deeper understanding
of the Reeb dynamics on a contact manifold
Gromov convergence of holomorphic discs
is required for totally real boundary conditions.

A description how a sequence of holomorphic
discs with uniform $W^{1,2}$-bounds
and totally real boundary conditions
degenerates in the sense of Sacks-Uhlenbeck
can be found in \cite{ye94}.
In view of the methods used in symplectic topology
a description in the language of stable maps
due to Kontsevich \cite{kont95}
as in \cite{fraun08} is required.
Approaches that use nodal surfaces similar
to Gromov's original description
in \cite{grom85} can be found in \cite{ursfu14,ivshe02}.
An alternative concept makes use of the so-called bubble trees,
see \cite{fraun08,mcsa04}.
To provide an extension of this
to holomorphic discs with totally real boundary conditions
is the aim of the article.

\subsection{Formulation of the theorem}

Let $(M,\omega)$ be a symplectic manifold
of dimension $2n$.
Let $J$ be an almost complex structure on $M$
that is tamed by $\omega$.
We consider a {\bf totally real}
submanifold $L$,
which by definition is an
$n$-dimensional submanifold $L$ of $M$
such that $TL\cap JTL$ is the zero section.
All closed Lagrangian submanifolds
and the boundaries in the filling problem
are included.
A {\bf holomorphic disc}
\[
u\co(\D,\partial\D)\lra (M,L)
\]
is a smooth map
$u\co\D\ra M$
defined on the closed unit disc
$\D\subset\C$
that is a solution of the
{\bf Cauchy-Riemann equation}
\[
u_x+J(u)u_y=0,
\]
where $z=x+\rmi y$
are conformal coordinates on $\D$
and $u_x$, $u_y$ stand for 
$Tu(\partial_x)$, $Tu(\partial_y)$.
We assume that $u$ maps the boundary $\partial\D$
into the totally real submanifold $L$ of $M$.

A sequence $u^{\nu}$
of holomorphic discs is said to be {\bf bounded}
provided that there exists a compact subset of $M$
that contains $u^{\nu}(\D)$ for all $\nu\in\N$
and a compact subset of $L$
that contains $u^{\nu}(\partial\D)$ for all $\nu\in\N$.
We say that $u^{\nu}$
has {\bf bounded energy} if
\[
\sup_{\nu\in\N}E(u^{\nu})<\infty,
\]
where the {\bf energy}
of a holomorphic disc $u$ is defined to be
\[
E(u)=\int_{\D}u^*\omega.
\]
We remark that by the tameness condition
the symmetrisation of $\omega(\,.\,,J\,.\,)$
is a metric on $M$.
With respect to this metric the energy
of a holomorphic disc $u$
equals the {\bf Dirichlet energy}
\[
E(u)=\frac12\int_{\D}|\nabla u|^2,
\]
where the {\bf energy density} is defined by 
\[
|\nabla u|^2=|u_x|^2+|u_y|^2.
\]

\begin{thm}
  \label{mainthm}
  Any bounded sequence of holomorphic discs
  with bounded energy
  has a subsequence that Gromov converges
  to a stable holomorphic map
  of genus zero with connected boundary.
  The relative homotopy class is preserved
  under Gromov convergence.
\end{thm}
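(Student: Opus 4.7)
The plan is to carry out a bubbling-off analysis in the spirit of Sacks-Uhlenbeck and Gromov, adapted to the boundary-value problem with a totally real (not necessarily Lagrangian) constraint $L$. I would extract, by iterated rescaling at points of gradient concentration, a bubble tree consisting of a principal holomorphic disc together with sphere and disc bubbles, and then verify stability, the genus-zero condition, connectedness of the boundary, and preservation of energy and of relative homotopy class.

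\textbf{Step 1: $\varepsilon$-regularity and energy quantisation.} The cornerstone is a mean-value estimate: there is a constant $\hbar>0$ such that any holomorphic disc (or half-disc with boundary on $L$) whose energy on a geodesic ball is below $\hbar$ admits $C^k$ bounds on the concentric half-ball for every $k$. Interior points are handled by the classical elliptic estimates for $\DB$; at boundary points one uses the totally real condition $TL\cap JTL=0$ to reflect across $\partial\D$ and obtain an elliptic boundary problem. A monotonicity inequality combined with the compactness of the image gives a lower energy bound $\ge\hbar$ for every non-constant holomorphic sphere in $M$ and every non-constant holomorphic disc with boundary on a fixed compact subset of $L$.

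\textbf{Step 2: Soft convergence off a finite bad set.} Call $z\in\D$ a \emph{bad point} for a subsequence if $\limsup_\nu\int_{B_r(z)\cap\D}|\nabla u^\nu|^2\ge\hbar$ for every $r>0$. Uniform bounded energy forces the bad set $Z$ to be finite. Off $Z$, Step~1 yields locally uniform $C^\infty$ bounds, and a diagonal Arzel\`a-Ascoli argument produces a holomorphic limit $u^\infty\co\D\setminus Z\ra M$; by removal of singularities for half-discs of finite energy with totally real boundary (in the form proved by Ye), $u^\infty$ extends smoothly to a holomorphic disc $\D\ra M$ with boundary in $L$.

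\textbf{Step 3: Bubble extraction and iteration.} At each $z_0\in Z$, choose a gradient-maximising sequence $z^\nu\ra z_0$ and rescale by $\epsilon^\nu=1/|\nabla u^\nu(z^\nu)|$, forming $v^\nu(w)=u^\nu(z^\nu+\epsilon^\nu w)$. Depending on whether $z_0$ lies in the interior or on $\partial\D$, and on the behaviour of $\dist(z^\nu,\partial\D)/\epsilon^\nu$, the limiting domain is $\C$ (giving, after removal of the puncture at infinity, a non-constant holomorphic sphere) or a half-plane conformally equivalent to $\D$ (giving a non-constant holomorphic disc with boundary on $L$). Because each extracted bubble carries energy $\ge\hbar$, the total bound on $E(u^\nu)$ forces only finitely many extractions; iterating on each bubble exhausts all energy concentration and produces a rooted finite tree, with root the principal disc, whose nodes correspond to the bubbling locations and whose sphere components graft into interiors while disc components attach along boundary nodes.

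\textbf{Step 4: No lost energy, stability, and preservation of the homotopy class.} The \emph{main technical hurdle} is the ``no energy lost in the neck'' statement for the annular and half-annular regions interpolating between an original component and a bubble. For Lagrangian $L$ one uses exactness of $\omega$ near $L$ to turn the energy into a boundary integral; since $L$ is here only totally real, $\omega$ need not vanish on $L$, so I would instead work inside a tubular symplectic model around the compact subset of $L$ supporting the boundary values, in which a primitive exists, and combine the classical interior annulus decay estimate with a boundary decay estimate based on the elliptic reflection of Step~1. Once this is established, the limits sum correctly, $\lim E(u^\nu)=E(u^\infty)+\sum E(\text{bubbles})$, no hidden energy escapes, and iteration terminates with a stable bubble tree (any ghost component necessarily carrying at least three special points after saturation). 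Genus zero and connected boundary are automatic from the extraction procedure, which only inserts spheres at interior nodes and discs at boundary nodes of a connected disc root. Finally, the relative homotopy class is preserved because once neck convergence is known, small annular collars around the bubbling loci are null-homotopic rel.\ $L$ in $M$, so the pre-limit disc is homotopic rel.\ $L$ to the connect sum of the components of the bubble tree.
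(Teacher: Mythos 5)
Your Step 4 correctly isolates the crux and proposes essentially the paper's fix: near the compact piece $K_L\subset L$ carrying the boundary values one installs a local exact symplectic form $\omega_L=\rmd\lambda$ (via the normal exponential map onto a neighbourhood of $K_L$ in $T^*L$) whose primitive vanishes on $L$ and which still tames $J$; the isoperimetric inequality for $\lambda$ then yields the exponential decay of energy on long necks exactly as in the Lagrangian case, and the same local model makes the removal-of-singularities and mean-value arguments of your Steps 1 and 2 go through. Up to that point your outline matches the paper.

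The genuine gap is in Step 3 and its interaction with stability. First, rescaling by $\epsilon^\nu=1/|\nabla u^\nu(z^\nu)|$ at the gradient maximum extracts only the innermost bubble at each bad point and gives no control over the intermediate scales between that bubble and the component you started from; in particular it does not by itself produce a tree in which consecutive components connect and carry no hidden neck energy. The paper instead uses the soft rescaling of Hofer--Salamon, choosing the scale $\delta^\nu$ by the energy normalisation $E\big(w^\nu;B_{\delta^\nu}(z^\nu)\big)=m_0-\hbar/2$; it is this choice that forces the rescaled limit either to be non-constant or to have at least two bubble points (stability), and that feeds into the concentration inequality to rule out energy loss between scales. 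Second, your dichotomy ``spheres graft into interiors, discs attach along boundary nodes'' misses a configuration that genuinely occurs and that your own case distinction on $\dist(z^\nu,\partial\D)/\epsilon^\nu$ should have flagged: when the gradient concentrates at $z_0\in\partial\D$ but the concentration scale is much smaller than the distance to the boundary, the soft-rescaled limit is a \emph{constant} disc with exactly one interior bubble point, i.e.\ a ghost disc mediating between the boundary of the parent disc and a sphere bubble touching $L$ in one point. This intermediate component is unavoidable: the nodal point on the parent lies on $\partial\D$, so the adjacent component must be a disc, yet the actual energy concentrates at an interior point of the rescaled domain. Without it the limit would identify an interior nodal point with a boundary nodal point, which is excluded for a stable map of genus zero with connected boundary. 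Correspondingly, your stability criterion (``at least three special points on any ghost'') is not the right one here --- a constant disc whose two nodal points do not both lie on the boundary is stable, and that is precisely the component this case produces.
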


The definitions of Gromov convergence and stable maps,
which we took from \cite{fraun08},
are repeated in Sections \ref{subsec:stablemaps}
and \ref{subsec:gromovconv} below.
The proof of the theorem is given in
Section \ref{sec:bubbcon}.
The primary idea is to Lagrangefy the boundary condition
and is described in Section \ref{sec:lagrangefy}.
In Section \ref{sec:general}
we will indicate several generalizations.

\subsection{Holomorphic spheres\label{subsec:holomspher}}

A holomorphic sphere is a smooth map
$u\co\C P^1\ra M$
that solves the Cauchy-Riemann equation
$J\circ Tu=Tu\circ\rmi$,
where $\rmi$ denotes the complex structure
on $\C P^1=\C\cup\infty$
obtained by adding the point at infinity to $\C$.
The energy of a holomorphic sphere is
\[
E(u)=\int_{\C P^1}u^*\omega=\frac12\int_{\C P^1}|T u|^2,
\]
where the Dirichlet energy integral is taken
with respect to the area form
induced by the Fubini-Study metric.

\subsection{Stable maps\label{subsec:stablemaps}}

A {\bf tree} is a finite $1$-dimensional
simply connected cell complex $T$.
The set of $0$-cells (or vertices) is denoted by $V$
and the set of $1$-cells (or edges) by $E$.
An edge is uniquely described by the set of its vertices.
To each vertex $\alpha$
we attach a holomorphic map
\[
u_{\alpha}\co(\Sigma_{\alpha},\Gamma_{\alpha})
\lra (M,L)
\]
that is required to be either
a holomorphic disc $u_{\alpha}\co(\D,\partial\D)\ra (M,L)$,
a holomorphic sphere $u_{\alpha}\co\C P^1\ra M$, or
a holomorphic sphere $u_{\alpha}\co(\C P^1,\infty)\ra (M,L)$
that maps the point at infinity to $L$.

To each edge $\{\alpha,\beta\}$
we associate two points
$z_{\alpha\beta}\in\Sigma_{\alpha}$ and
$z_{\beta\alpha}\in\Sigma_{\beta}$
which we call {\bf nodal points}.
If $z_{\alpha\beta}$ is a boundary
point on $\Sigma_{\alpha}=\D$
then $\Sigma_{\beta}=\D$ is a disc
and $z_{\beta\alpha}$ is a boundary point too.
In other words,
the nodal points corresponding to an edge
are either interior or boundary points;
no interior point can correspond
to a boundary point and vice versa.
We require
that for different edges $\{\alpha,\beta\}$
and $\{\alpha,\gamma\}$ the points $z_{\alpha\beta}$
and $z_{\alpha\gamma}$ on $\Sigma_{\alpha}$
are different.
If the surface $(\Sigma_{\alpha},\Gamma_{\alpha})$
is the pointed $(\C P^1,\infty)$
we require all nodal points on
$\Sigma_{\alpha}$ to be different from $\infty$.

The {\bf boundary tree} $\partial T$ of $T$
is the subtree, whose vertices $\alpha$
correspond to surfaces $\Sigma_{\alpha}$
with $\Gamma_{\alpha}\neq\emptyset$.
We require that $\partial T$ is non-empty.
The boundary tree
consists of a single vertex
if $\Gamma_{\alpha}=\infty$
for some $\alpha\in V$.

The collection of holomorphic maps $u_{\alpha}$
is required to satisfy
$u_{\alpha}(z_{\alpha\beta})=u_{\beta}(z_{\beta\alpha})$
for all nodal points.
Passing to the quotient
of the disjoint union of the $\Sigma_{\alpha}$
by the set of all nodal points
we obtain a nodal surface
of genus zero with connected
boundary
that eventually is collapsed to one point.
The $u_{\alpha}$ pass
to a holomorphic map $\mathbf u$
on the quotient.

We denote the set of nodal points
on $\Sigma_{\alpha}$ by $Z_{\alpha}$
including $\infty$ in the pointed case.
The map $\mathbf u$ is called {\bf stable}
if for all vertices $\alpha$ for which
the map $u_{\alpha}$ is constant
the number of nodal points is
$\#Z_{\alpha}\geq3$
(regardless $\Sigma_{\alpha}$ is a disc or a sphere)
or $Z_{\alpha}$ consists of two points
that do not both lie on the
boundary $\Gamma_{\alpha}=\partial \D$
in the case that 
$\Sigma_{\alpha}=\D$ is a disc.

\subsection{Energy of a bubble tree\label{subsec:energybubbletree}}

Let $\mathbf u=(u_{\alpha})$ be a stable map
with tree $T=(V,E)$.
Removing (the interior of)
an edge $\{\alpha,\beta\}\in E$
decomposes $T$ into two subtrees.
Denote by $T_{\alpha\beta}$ the component
that has $\beta$ as a vertex.
Observe that any subtree of $T$
is of the form $T_{\alpha\beta}$.
The {\bf energy of the bubble tree}
$T_{\alpha\beta}$ is
\[
E_{\alpha\beta}(\mathbf u)=
\sum_{\gamma\text{\,\,is vertex of\,\,}T_{\alpha\beta}}
E(u_{\gamma}).
\]
Similarly,
the {\bf total energy} is
\[
E(\mathbf u)=
\sum_{\alpha\in V}
E(u_{\alpha}).
\]

\subsection{Gromov convergence\label{subsec:gromovconv}}

We say
that a sequence of holomorphic discs
{\bf Gromov converges}
to a stable map $\mathbf u$
(denoting the underlying tree by $T=(V,E)$)
provided that for each $\alpha\in V$
there exists a sequence of
M\"obius transformations $\varphi_{\alpha}^{\nu}$
in the automorphism group of $\Sigma_{\alpha}$
such that the following holds:
If $\Sigma_{\alpha}$ is the disc $\D$
or the unpointed $\C P^1$ we require:
\begin{itemize}
\item 
  {\bf (Rescaling):}
  For all $\{\alpha,\beta\}\in E$
  the sequence
  $\big(\varphi_{\alpha}^{\nu}\big)^{-1}\circ\varphi_{\beta}^{\nu}$
  converges to the constant map $z_{\alpha\beta}$
  in $C^{\infty}_{\loc}$
  on $\Sigma_{\beta}\setminus\{z_{\beta\alpha}\}$.
\item 
  {\bf (Map):}
  For all $\alpha\in V$
  the sequence
  \[
  u^{\nu}_{\alpha}:=u^{\nu}\circ\varphi_{\alpha}^{\nu}
  \]
  converges to $u_{\alpha}$
  in $C^{\infty}_{\loc}$
  on $\Sigma_{\alpha}\setminus Z_{\alpha}$.
\item 
  {\bf (Energy):}
  For all $\{\alpha,\beta\}\in E$
  \[
  E_{\alpha\beta}(\mathbf u)
  =
  \lim_{\varepsilon\ra\infty}
  \lim_{\nu\ra\infty}
  E\big(u^{\nu}_{\alpha};B_{\varepsilon}(z_{\alpha\beta})\big),
  \]
  where $B_{\varepsilon}(z_{\alpha\beta})$ denotes
  set of points in $\Sigma_{\alpha}$
  with distance less than $\varepsilon$
  from $z_{\alpha\beta}$
  and
  \[
  E\big(u^{\nu}_{\alpha};B_{\varepsilon}(z_{\alpha\beta})\big)=
  \int_{B_{\varepsilon}(z_{\alpha\beta})}(u^{\nu}_{\alpha})^*\omega.
  \]
\end{itemize}

If the {\bf degenerate case}
$(\Sigma_{\alpha}=\C\cup\infty,\Gamma_{\alpha}=\infty)$
appears we require in addition
that:

\begin{itemize}
\item 
  {\bf (Rescaling):}
  There exists for each compact
  subset $K\subset\C$
  a natural number $\nu_0$
  such that
  $\varphi_{\alpha}^{\nu}(K)\subset B_1(0)$
  for all $\nu\geq\nu_0$.
\item 
  {\bf (Energy):}
  For $\infty\in\Sigma_{\alpha}$
  \[
  \lim_{\varepsilon\ra\infty}
  \lim_{\nu\ra\infty}
  E
  \Big(
  u^{\nu}_{\alpha};
  B_{\varepsilon}(\infty)\cap\big(\varphi_{\alpha}^{\nu}\big)^{-1}(\D)
  \Big)=0,
  \]
  where $B_{\varepsilon}(\infty)$ denotes
  set of points in $\Sigma_{\alpha}$
  with distance less than $\varepsilon$
  from $\infty$.
\end{itemize}


\section{Lagrangefy\label{sec:lagrangefy}}

The bubbling off phenomenon of holomorphic curves
is similar to the one of minimal surfaces.
A mean value inequality holds true
provided the energy is sufficiently small.
In the presence of totally real boundary conditions
the mean value inequality can be obtained
via Schwarz reflection of the energy density
with respect to a suitable local metric,
see \cite[p.\ 265]{fraun08}
and cf.\ Section \ref{subsec:meanvalineq}.
In order to obtain the removable singularity theorem
it is necessary that certain boundary terms during
partial integrations via Stokes's theorem vanish.
One way to achieve this
is a choice of a local symplectic form
that turns the boundary condition into
a Lagrangian submanifold as observed in \cite{zehm03},
see Section \ref{subsec:locsympform}.
After all the convergence modulo bubbling theorem
from \cite{mcsa04}
passes to the present situation,
see Section \ref{subsec:convmodbubb}.

Denote by $K_M\subset M$ and $K_L\subset L$
relative compact open subsets
such that $u^{\nu}(\D)\subset K_M$
and $u^{\nu}(\partial\D)\subset K_L$
for all $\nu\in\N$.

\subsection{Local symplectic form\label{subsec:locsympform}}

By \cite[p.\ 544/545]{geizeh13}
there exists a relative compact
open domain $\Omega_L$ in $M$
and a symplectic form $\omega_L$
on a neighbourhood of the closure of $\Omega_L$
such that $\Omega_L$ contains $K_L$ and
$\omega_L$
\begin{itemize}
\item
  tames the restriction of $J$ to $\Omega_L$,
\item 
  has a primitive $\lambda$ that vanishes on $L$,
  and
\item 
  turns $JT(L\cap\Omega_L)$ into a Lagrangian subbundle
  of $T\Omega_L$.
\end{itemize}
Notice,
that the metrics obtained by symmetrising
$\omega(\,.\,,J\,.\,)$ and $\omega_L(\,.\,,J\,.\,)$
restricted to $\Omega_L$
are equivalent.

The local symplectic form is
constructed in the following way:
Let $g$ be a metric on $M$
such that
$J$ is a orthogonal transformation and
the subbundle $TL$ of $TM|_L$
is orthogonal to $JTL$ along a
neighbourhood of $K_L$.
Define a non-degenerate $2$-form $\sigma:=g(J\,.\,,.\,)$
and notice that $\sigma$ tames $J$.
The normal exponential map of $g$ allows
an identification of a neighbourhood
$\Omega_L$ of $K_L$
with a neighbourhood of $K_L$ in $T^*L$,
cf.\ the proof of \cite[Theorem 3.33]{mcsa98}.
Under this identification the restriction
of $\sigma$ to $L\cap\Omega_L$ equals $\rmd\lambda$,
where $\lambda$ denotes the Liouville form
given by $T^*L$.
The claim follows by setting $\omega_L=\rmd\lambda$
and shrinking $\Omega_L$ if necessary.

\subsection{Mean value inequality\label{subsec:meanvalineq}}

We denote by $B_r(z)$
the set of complex numbers
in the closed upper half-plane $\Hp$
whose Euclidean distance to $z\in\Hp$ is
smaller than $r$.
Then there exist positive constants
$\hbar$ and $C$
such that for all holomorphic maps
\[
w\co(\Hp,\R)\lra (K_M,K_L)
\]
with
\[
\frac12\int_{B_r(z)}|\nabla w|^2
<\hbar
\]
the mean value inequality
\[
|\nabla w(z)|^2
\leq
\frac{C}{r^2}\int_{B_r(z)}|\nabla w|^2
\]
is satisfied.
This follows from
\cite[Appendix A]{fraun08} or
\cite[Section 4.3]{mcsa04} in the totally real case also
even if the results are only stated for Lagrangian boundaries.

In particular,
all non-constant holomorphic
spheres $\C P^1\ra K_M$
and all non-constant holomorphic
discs $(\D,\partial\D)\ra (K_M,K_L)$
have energy greater or equal to $\hbar$.
Indeed,
because of the conformal invariance of the energy
we can assume that $w$ is defined
on $\Hp$ or $\C$, resp.
Consider $B_r(z)$ for each point $z$ in $B_r(0)$.
If the energy of $w$ on $B_{2r}(0)$
is smaller than $\hbar$
then $\sup_{B_r(0)}|\nabla w|^2$
is bounded by the energy of $w$
on $B_{2r}(0)$ times $2C/r^2$.
The claim follows
if this inference is conclusive
for arbitrary large radii $r$.

\subsection{Convention\label{subsec:convention}}

The mean value inequality estimates
the gradient of a holomorphic map
in terms of its $L^2$-norm
provided the energy is less than $\hbar$.
Because our applications
will be geometric in nature
the holomorphic maps
to which we will apply
the inequality
will have even smaller energy.
Therefore,
we shrink $\hbar$ such that:
\begin{itemize}
\item 
  The quantity $\sqrt{8C\hbar}\;\pi$
  is smaller than the distance of $K_L$ and
  the boundary of $\Omega_L$.
\item 
  There exists a positive constant $c$
  such that for any smooth curve $\gamma$
  with endpoints $\gamma(0)$ and $\gamma(1)$
  on $K_L$ and $\length(\gamma)$
  less than or equal to $\sqrt{8C\hbar}\;\pi$
  the {\bf isoperimetric inequality}
  holds true in the sense that
  \[
  \left|
    \int_0^1\gamma^*\lambda
  \right|
  \leq
  c\length(\gamma)^2,
  \]
  as well as the corresponding
  isoperimetric inequality for
  smooth loops,
  see \cite[Theorem 4.4.1]{mcsa04}.
  Notice, that the inequality
  is invariant under reparametrisations.
\end{itemize}

\subsection{Removal of singularity\label{subsec:remofsing}}

Let $w$ be a punctured
holomorphic half-plane
\[
w\co
\big(
\Hp\setminus 0,\R\setminus 0
\big)
\lra (K_M,K_L)
\]
or a punctured holomorphic plane
$w\co\C\setminus 0\ra K_M$ of finite energy.
Then $w$ extends to a holomorphic map
on $\Hp$, resp., $\C$.

To see this notice
that
\[
E\big(w;B_r(0)\big)=\int_{B_r(0)}w^*\omega
\]
tends to zero for $r\searrow 0$.
Using polar coordinates we consider
arcs $\gamma_r(\theta)=w(r\rme^{\rmi\theta})$,
for $\theta\in[0,\pi]$,
that lie on the holomorphic half-plane $w$.
Because of
\[
|\dot\gamma_r(\theta)|^2
=\frac{r^2}{2}
\big|\nabla w(r\rme^{\rmi\theta})\big|^2
\]
and $B_r(r\rme^{\rmi\theta})\subset B_{2r}(0)$
the mean value inequality implies
\[
|\dot\gamma_r(\theta)|^2
\leq\frac{C}{2}
\int_{B_{2r}(0)}
\big|\nabla w\big|^2
\]
provided that $r$ is sufficiently small.
Therefore,
the length of $\gamma_r$ tends to zero
so that $w$ maps a neighbourhood of
$0\in\Hp$ into $\Omega_L$.

Observe that up to this point the exact
local symplectic form $\omega_L$ is not used. 
The removability
of the singularity follows
with \cite[Theorem 4.1.2]{mcsa04},
which requires a Lagrangian boundary condition
in the case of the half-plane.
Using the symplectic form $\omega_L$
on $\Omega_L$ this requirement will be satisfied.

\subsection{Convergence modulo bubbling\label{subsec:convmodbubb}}

Let $\Omega^{\nu}$ be exhausting sequence
of open subsets of $\Hp$.
Let
$
w^{\nu}\co
\big(
\Omega^{\nu},\Omega^{\nu}\cap\R
\big)
\ra (K_M,K_L)
$
be a sequence of holomorphic maps
with bounded energy.
With the results of the proceeding section
one shows as in \cite[Section 4.6]{mcsa04}:
There exists a holomorphic map
$
w\co(\Hp,\R)\ra (K_M,K_L)
$
and a finite set
$Z=\{z_1,\ldots,z_{\ell}\}$ of points
in $\Hp$,
the so-called {\bf bubble points},
such that
\begin{itemize}
\item 
  $w^{\nu}$ converges to $w$
  in $C^{\infty}_{\loc}$ on $\Hp\setminus Z$.
\item 
  For all bubble points $z_j$
  and for all $\varepsilon>0$
  such that $B_{\varepsilon}(z_j)\cap Z$
  consists of precisely the bubble point $z_j$
  the limit
  \[
  m_{\varepsilon}(z_j):=
  \lim_{\nu\ra\infty}
  E\big(w^{\nu};B_{\varepsilon}(z_j)\big)
  \]
  exists.
  Moreover the map
  $\varepsilon\mapsto m_{\varepsilon}(z_j)$
  is continuous and the {\bf mass} $m_j$
  of the bubble point $z_j$ satisfies
  \[
  m_j:=
  \lim_{\varepsilon\ra 0}
  m_{\varepsilon}(z_j)
  \geq\hbar.
  \]
\item 
  For all relative compact subsets $K$ of $\Hp$
  that contain the bubble set $Z$
  \[
  \lim_{\nu\ra\infty}E(w^{\nu};K)
  =E(w;K)+m_1+\ldots+m_{\ell}.
  \]
\end{itemize}
The analog statement with $\Hp$ replaced by $\C$
holds true.


\section{Bubbles connect\label{sec:bubbcon}}

In this section we will prove
Theorem \ref{mainthm}.
The argumentation is the same as in
\cite[Section 3.2.2]{fraun08}
and the reader is referred to the cited source.
Only modifications of the bubbling off analysis
caused by the totally real boundary condition are required.
The implementation of the modifications
is the aim of this section.

A sequence of holomorphic discs as described
in the theorem converges modulo bubbling
to a holomorphic disc,
see Section \ref{subsec:convmodbubb}.
A suitable rescaling in a neighbourhood
of any of the bubble points
yields sequences of holomorphic maps
that again converge modulo bubbling.
The proof proceeds by induction,
which terminates after finitely many steps
due to the energy bound.
Care has to be taken in view of the stability condition,
see \cite[Section 3.2.2]{fraun08}.

The rescaling near the bubble points is done
in a particular way,
see Section \ref{subsec:softrescal}.
In order to obtain a stable map for the limiting object
it is necessary to detect all the bubbles.
The main tool for that is the
concentration inequality of Section \ref{subsec:longsmall}.
It implies that the energy of the cut off annuli
that have large ratio
is concentrated near the boundary components.
In order to show that the bubbles connect
the distance-energy inequality is provided in
Section \ref{subsec:longsmall} also.
That the relative homotopy class is preserved
under Gromov convergence will follow
with the distance-energy inequality too,
see \cite[Section 3.3]{fraun08}.

\subsection{Long cylinders with small energy\label{subsec:longsmall}}

We consider a sequence of holomorphic maps
$w^{\nu}\co
\big(
B_1(0),(-1,1)
\big)\ra
(M,L)$
that are defined
on the open unit half-disc
in $\Hp$.
We assume that
$w^{\nu}\big(B_1(0)\big)\subset K_M$
and
$w^{\nu}\big((-1,1)\big)\subset K_L$
for all $\nu\in\N$.
Moreover,
we consider sequences
$\varepsilon^{\nu}$, $\delta^{\nu}$
of positive real numbes and
a sequence of complex numbers
$z^{\nu}=x^{\nu}+\rmi y^{\nu}$
on the upper half-plane
such that
\begin{itemize}
\item
  $\delta^{\nu}\leq\varepsilon^{\nu}\searrow 0$,
\item
  $\varepsilon^{\nu}/\delta^{\nu}\ra\infty$,
\item
  $z^{\nu}$ converges to $0$ in $\Hp$, and
\item
  there exists a positive constant $\eta$
  such that
  $0\leq y^{\nu}/\delta^{\nu}\leq\eta$.
\end{itemize}
We assume that
the energy
\[
e(w^{\nu}):=
E\big(w^{\nu};
A_{z^{\nu}}(\delta^{\nu},\varepsilon^{\nu})
\big)
<\hbar
\]
for all $\nu\in\N$,
where
\[
A_z(\delta,\varepsilon):=
B_{\varepsilon}(z)\setminus\overline{B_{\delta}(z)}
\subset\Hp
\]
is an open annulus
cut off along the real line.

We remark that for all $T>\ln\eta$
we have $y^{\nu}<\rme^T\delta^{\nu}$
so that both circular boundary components of
$A_{z^{\nu}}\big(\rme^T\delta^{\nu},\rme^{-T}\varepsilon^{\nu}\big)$
intersect the real axis.
Moreover,
the annulus
$A_{z^{\nu}}\big(\rme^T\delta^{\nu},\rme^{-T}\varepsilon^{\nu}\big)$
is non-empty precisely if $T<\ln\sqrt{\varepsilon^{\nu}/\delta^{\nu}}$.

\begin{lem}
  \label{lem:concentration}
  There exist positive constants
  $c$ (only depending on the geometry of $K_L$ and $K_M$),
  $T_0$ (only depending on $\eta$),
  and a natural number $\nu_0$
  such that for all $\nu\geq\nu_0$
  (we have $\ln\sqrt{\varepsilon^{\nu}/\delta^{\nu}}>T_0$)
  and for all
  $T\in\big[T_0,\ln\sqrt{\varepsilon^{\nu}/\delta^{\nu}}\big]$
  the following inequalities hold true,
  where we abbreviate $w=w^{\nu}$, $z=z^{\nu}$
  $\varepsilon=\varepsilon^{\nu}$, and
  $\delta=\delta^{\nu}$:
  \begin{itemize}
  \item 
    {\bf (Concentration inequality):}
    \[
    E\Big(w;
    A_{z}
    \big(\rme^T\delta,
    \rme^{-T}\varepsilon\big)
    \Big)
    \leq
    c\,\rme^{-T/c}e(w).
    \]
  \item 
    {\bf (Distance-energy inequality):}
    For all points $z_1,z_2$
    on the cut off annulus
    $A_{z}
    \big(\rme^T\delta,
    \rme^{-T}\varepsilon\big)$
    we have that
    \[
    \dist\big(w(z_1),w(z_2)\big)
    \leq
    c\,\rme^{-T/c}\sqrt{e(w)}.
    \]
  \end{itemize}
\end{lem}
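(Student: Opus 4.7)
The plan is to adapt the Lagrangian annulus lemma (compare \cite[Theorem~4.4.1]{mcsa04}) to the totally real setting by working throughout with the local symplectic form $\omega_L=\rmd\lambda$ from Section~\ref{subsec:locsympform}, whose induced metric is equivalent to the $\omega$-metric on $\Omega_L$. Pass to log-polar coordinates centred at $z^\nu$ via $\phi(s+\rmi\theta)=z^\nu+\rme^{s+\rmi\theta}$, which identify $A_{z^\nu}(\rme^T\delta^\nu,\rme^{-T}\varepsilon^\nu)$ with a curvilinear rectangle in the strip $\R\times[0,\pi]$. The hypothesis $y^\nu/\delta^\nu\le\eta$ together with $T\ge T_0(\eta)$ ensures that for every $s\in[\ln\delta^\nu+T,\ln\varepsilon^\nu-T]$ the circle $\partial B_{\rme^s}(z^\nu)$ meets $\R$, so the arc $\gamma_s(\theta):=w^\nu(z^\nu+\rme^{s+\rmi\theta})$ has both endpoints on $K_L\subset L$.

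For the concentration inequality, apply Stokes's theorem on the cut-off annulus $A_{z^\nu}(\rme^{s_1},\rme^{s_2})$ to $(w^\nu)^*\omega_L=\rmd\bigl((w^\nu)^*\lambda\bigr)$. Since $\lambda|_L=0$, the real-axis portions of the boundary contribute nothing, leaving
\[
E_L\bigl(w^\nu;A_{z^\nu}(\rme^{s_1},\rme^{s_2})\bigr)
=\int_{\gamma_{s_2}}(w^\nu)^*\lambda-\int_{\gamma_{s_1}}(w^\nu)^*\lambda.
\]
The mean value inequality of Section~\ref{subsec:meanvalineq} with $e(w^\nu)<\hbar$ keeps $\length(\gamma_s)\le\sqrt{8C\hbar}\,\pi$, so the isoperimetric inequality of Section~\ref{subsec:convention} yields $\bigl|\int_{\gamma_s}(w^\nu)^*\lambda\bigr|\le c\length(\gamma_s)^2$. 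The polar form of the Cauchy--Riemann equation gives $|\partial_\theta w^\nu|^2=\tfrac12 r^2|\nabla w^\nu|^2$, and Cauchy--Schwarz bounds $\length(\gamma_s)^2\le\pi r^2\tilde\mu(r)$, where $\tilde\mu(r):=\tfrac12\int_{\Theta(s)}|\nabla w^\nu|^2\,\rmd\theta$. Setting $\epsilon(T):=E(w^\nu;A_{z^\nu}(\rme^T\delta^\nu,\rme^{-T}\varepsilon^\nu))$ and computing $\epsilon'(T)=-r_1^2\tilde\mu(r_1)-r_2^2\tilde\mu(r_2)$ in polar coordinates, these estimates combine to the differential inequality $\epsilon(T)\le-c_1\epsilon'(T)$, and Gronwall integration from $T_0$ gives the first claim.

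For the distance--energy inequality, connect $z_1,z_2\in A_{z^\nu}(\rme^T\delta^\nu,\rme^{-T}\varepsilon^\nu)$ by a log-polar path consisting of one angular arc at the smaller radius followed by a radial segment. At each point $\zeta$ with $\rho:=|\zeta-z^\nu|$, the ball $B_{\rho/3}(\zeta)$ lies in a slightly larger cut-off annulus on which the concentration inequality just proved applies with effective shrink parameter $T_\zeta:=\min(\ln\rho-\ln\delta^\nu,\ln\varepsilon^\nu-\ln\rho)-\ln 3$, and the mean value inequality then yields the pointwise bound $|\nabla w^\nu(\zeta)|^2\le C_2\rho^{-2}\rme^{-T_\zeta/c_1}e(w^\nu)$. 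Integrating $|\nabla w^\nu|$ along the path, the angular segment contributes at most $\pi\sqrt{C_2\rme^{-T/c_1}e(w^\nu)}$, and the radial part, after substituting $s=\ln\rho$, becomes $\int\rme^{-T_s/(2c_1)}\,\rmd s\cdot\sqrt{C_2e(w^\nu)}$, a geometric integral bounded by a constant times $\rme^{-T/(2c_1)}\sqrt{e(w^\nu)}$. The main obstacle is precisely this radial estimate: a naive $\rho^{-1}$ integration would produce a divergent $\ln(\varepsilon^\nu/\delta^\nu)$, but since $T_\zeta$ is large whenever $\rho$ is far from the midradius, the concentration inequality makes the integrand exponentially small over most of the range. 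Summing the two contributions yields the desired bound; ensuring that both endpoints of every $\gamma_s$ lie on $L$ (so that Stokes applies cleanly) and that $\length(\gamma_s)$ stays below the isoperimetric threshold are exactly the technical points prepared by the hypotheses on $T_0$, $\eta$, and by the choices of $\hbar$ and $\Omega_L$ in Sections~\ref{subsec:locsympform}--\ref{subsec:convention}.
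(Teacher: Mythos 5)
Your proposal is correct and follows essentially the same route as the paper: first confine the image of the sub-annulus to $\Omega_L$ via the mean value inequality and the conventions on $\hbar$, then derive the concentration inequality from Stokes with $\lambda|_L=0$, the isoperimetric inequality, Cauchy--Schwarz and Gr\"onwall, and finally integrate pointwise gradient bounds (mean value inequality fed by the concentration inequality on nested cut-off annuli) along an arc-plus-segment path for the distance--energy inequality. The only cosmetic deviations are that you skip the initial symmetrising rescaling by $\sqrt{\delta\varepsilon}$ and use a genuinely radial segment instead of the paper's vertical segment at angle $\pi/2$, both of which are harmless.
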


\begin{rem}
 As the proof will show we can take
 $T_0=\max\big\{7,3+2\ln\eta\big\}$.
\end{rem}

\begin{proof}
  In order to symmetrise the problem
  we rescale
  \[
  v(\zeta):=
  w
  \big(
  \sqrt{\delta\varepsilon}\cdot\zeta
  \big)
  \]
  so that the resulting holomorphic map
  $v$ is defined on
  $B_{1/\sqrt{\delta\varepsilon}}(0)$.
  Notice, that the inverse rescaling map
  sends $A_{z}(\delta,\varepsilon)$ to
  \[
  A\big(1/R,R\big):=
  A_{z/\sqrt{\delta\varepsilon}}\big(1/R,R\big),
  \]
  where $R:=\sqrt{\varepsilon/\delta}$.
  By conformal invariance
  and invariance of the length
  under reparametrisations
  it suffices to prove
  the symmetrised variant of the lemma.
  In order to simplify the notation
  we will write $z$ instead of
  $z/\sqrt{\delta\varepsilon}$.
    
  The choice $T_0>\ln\eta$
  is understood
  so that
  for all $T\geq T_0$
  both circular boundary components
  of the cut off annuli
  $A\big(\rme^T/R,R/\rme^T\big)$
  intersect the real axis.
  We claim that for $T$ sufficiently large
  $v$ maps the annuli
  $A\big(\rme^T/R,R/\rme^T\big)$
  into $\Omega_L$.
  For that we consider the curves
  \[
  \gamma_r(\theta):=
  v\big(z+r\rme^{\rmi\theta}\big).
  \]
  In order to estimate its length
  we cover the arcs
  $\theta\mapsto z+r\rme^{\rmi\theta}$
  by the $r/2$-distance discs
  $B_{r/2}\big(z+r\rme^{\rmi\theta}\big)$
  of $\Hp$.
  We require that
  $r\in\big(2/R,2R/3\big)$
  so that the cut off discs
  $B_{r/2}\big(z+r\rme^{\rmi\theta}\big)$
  are contained in
  $A\big(1/R,R\big)$.
  Hence, the energy of $v$
  restricted to the
  $r/2$-distance discs
  is bounded by $\hbar$.
  The mean value inequality implies
  \[
  |\dot\gamma_r(\theta)|^2
  \leq 2C
  \int_{A(1/R,R)}
  \big|\nabla v\big|^2.
  \]
  Therefore,
  the length of $\gamma_r$
  is bounded by $2\pi\sqrt{2C\hbar}$.
  We choose $T_0>\ln 2$ so that
  $A\big(\rme^T/R,R/\rme^T\big)$
  is contained in
  $A\big(2/R,2R/3\big)$
  for all $T\geq T_0$.
  This implies that
  \[
  \length(\gamma_r)
  \leq
  \sqrt{8C\hbar}\;\pi
  \]
  for all
  $r\in\big(\rme^T/R,R/\rme^T\big)$.
  The curves $\gamma_r$ connect
  interior points of
  the holomorphic square
  \[
  v\Big(A\big(\rme^T/R,R/\rme^T\big)\Big)
  \]
  to the boundary in $K_L$
  provided $T\geq T_0$.
  By the assumptions on the quantity $\hbar$,
  see Section \ref{subsec:convention},
  the holomorphic square
  is contained in
  $\Omega_L$ as claimed.
  
  The closure of $\Omega_L$ is compact.
  Hence, up to a change of constants
  in the formulation of the lemma
  it suffices to prove the inequalities
  with respect to the symplectic form
  $\omega_L=\rmd\lambda$
  and the metric given by the
  symmetrisation of $\omega_L(\,.\,,J\,.\,)$.
  
  In view of the first inequality
  we will use logarithmic polar coordinates.
  We reparametrise
  \[
  u(s+\rmi t)=
  v\big(z+\rme^{s+\rmi t}\big).
  \]
  The preimage $D_T$ of
  $A\big(\rme^T/R,R/\rme^T\big)$
  in the square
  \[
  \big(T-\ln R,\ln R-T\big)
  \times
  \left
    (-\frac{\pi}{2},\frac32\pi
  \right)
  \]
  is bounded by the
  graphs of the functions $s\mapsto t_0(s),t_1(s)$,
  which are mapped into $K_L$ by $u$.
  Therefore, we can express the energy
  \[
  e(T)=\int_{D_T}u^*\rmd\lambda
  \]
  in terms of $\gamma_s(t)=u(s+\rmi t)$ as
  \[
  e(T)=
  \int\big(\gamma_{\ln R-T}\big)^*\lambda-
  \int\big(\gamma_{T-\ln R}\big)^*\lambda,
  \]
  where the first integral is taken
  over all $t$ in
  $\big(t_0(\ln R-T),t_1(\ln R-T)\big)$;
  the second over all $t$ in
  $\big(t_0(T-\ln R),t_1(T-\ln R)\big)$.
  With the isoperimetric inequality,
  see Section \ref{subsec:convention},
  this yields
  \[
  e(T)\leq
  c_1
  \left(
    \length\big(\gamma_{\ln R-T}\big)^2+
    \length\big(\gamma_{T-\ln R}\big)^2
  \right)
  \]
  for a universal constant $c_1>0$.
  Using the Cauchy-Schwarz inequality
  and the Cauchy-Riemann equation
  we obtain
  \[
  e(T)\leq
  2\pi c_1
  \left(
    \int
    \Big|u_s\big(\ln R-T+\rmi t\big)
    \Big|^2
    \rmd t+
    \int
    \Big|u_s\big(T-\ln R+\rmi t\big)
    \Big|^2
    \rmd t
  \right),
  \]
  where the first integral is taken over
  $\big(t_0(\ln R-T),t_1(\ln R-T)\big)$;
  the second over
  $\big(t_0(T-\ln R),t_1(T-\ln R)\big)$.
  Taking the derivative of
  \[
  e(T)=
  \int_{T-\ln R}^{\ln R-T}
  \left(
    \int_{t_0(s)}^{t_1(s)}
    \big|u_s(s+\rmi t)\big|^2
    \rmd t
  \right)
  \rmd s
  \]
  with respect to $T$ yields
  \[
  \frac{\rmd}{\rmd T}e(T)
  \leq
  -\frac{1}{2\pi c_1}e(T)
  \]
  for all $T\geq T_0$.
  Gr\"onwall's inequality implies
  \[
  e(T)\leq
  \rme^{-\frac{1}{2\pi c_1}(T-T_0)}e(T_0).
  \]
  Because $e(T_0)$ is bounded by $e(w)$
  up to a universal constant
  this proves the concentration inequality;
  the first inequality of the lemma.
  
  We prove the second.
  Observe that each two points
  \[
  z_1=z+r_1\rme^{\rmi\theta_1},
  \qquad
  z_2=z+r_2\rme^{\rmi\theta_2}
  \]
  in $A\big(\rme^T/R,R/\rme^T\big)$
  can be joint by a circular arc
  provided $r_1=r_2$, or,
  if $r_1\neq r_2$,
  by a path consisting of a circular arc
  connecting $z_1$ with $z+r_1\rmi$,
  a line segment connecting $z+r_1\rmi$
  with $z+r_2\rmi$, and a circular arc connecting
  $z+r_2\rmi$ with $z_2$.
  Therefore, it is possible to prove
  the second inequality
  with help of circular arcs
  and line segments in $z+\R\rmi$ only.
  
  We estimate the length of the path
  \[
  I\ni\theta\longmapsto
  v\big(z+r\rme^{\rmi\theta}\big),
  \]
  where $I$ is a subinterval of
  $\big(\!-\frac{\pi}{2},\frac32\pi\big)$
  and $r\in\big(\rme^T/R,R/\rme^T\big)$.
  By the mean value inequality
  \[
  \left|
    \frac{\partial v}{\partial\theta}
    \big(z+r\rme^{\rmi\theta}\big)
  \right|
  =
  \frac{r}{\sqrt2}
  \Big|
    \nabla v
    \big(z+r\rme^{\rmi\theta}\big)
  \Big|
  \]
  is bounded by
  \[
  \sqrt{2C}
  \left(
    \int_{B_{r/2}\big(z+r\rme^{\rmi\theta}\big)}
    \big|\nabla v\big|^2
  \right)^{1/2}.
  \]
  For all $T\geq\ln2+T_0$ we get with
  $S:=T-\ln2$ that
  $B_{r/2}\big(z+r\rme^{\rmi\theta}\big)$
  is contained in
  \[
  A\left(
    \frac12\frac{\rme^T}{R},\frac32\frac{R}{\rme^T}
  \right)=
  A\left(
    \frac{\rme^S}{R},\frac34\frac{R}{\rme^S}
  \right)  
  \]
  the latter being contained in
  $A\big(\rme^{T_0}/R,R/\rme^{T_0}\big)$.
  Hence,
  we can estimate the upper bound by
  \[
  2\sqrt{C}
  \sqrt{
    E\Big(v;A\big(\rme^S/R,R/\rme^S\big)\Big)
  },
  \]
  which by the concentration inequality
  is bounded by
  \[
  2\sqrt{C}
  \sqrt{c\,\rme^{-S/c}e(w)}.
  \]
  Consequently,
  replacing $T_0$ by $T_0+\ln2$
  we find a constant $c_2>0$
  such that
  \[
  \left|
    \frac{\partial v}{\partial\theta}
    \big(z+r\rme^{\rmi\theta}\big)
  \right|
  \leq
  c_2\rme^{-T/c_2}
  \sqrt{e(w)}
  \]
  for all $T\geq T_0$.
  Integrating $\theta$ over the subinterval $I$
  proves the distance-energy inequality
  for circular paths.
  
  We prove the distance-energy inequality
  along line segments
  $r\mapsto z+r\rmi$
  with
  $r\in\big(\rme^T/R,R/\rme^T\big)$.
  Using logarithmic polar coordinates
  we write
  \[
  u(s+\rmi t)=v\big(z+\rme^{s+\rmi t}\big),
  \]
  where $|s|<\ln R-T$ and $t\in(0,\pi)$.
  We choose $T_0>\pi/2$.
  The mean value inequality implies that
  \[
  \left|
    \frac{\partial u}{\partial s}
    \big(s+\rmi\pi/2\big)
  \right|
  =
  \frac{1}{\sqrt2}
  \Big|
    \nabla u
    \big(s+\rmi\pi/2\big)
  \Big|
  \]
  is bounded by
  \[
  \frac{\sqrt{2C}}{\pi}
  \left(
    \int_{B_{\pi/2}\big(s+\rmi\pi/2\big)}
    \big|\nabla u\big|^2
  \right)^{1/2}.
  \]
  Notice, that
  $B_{\pi/2}\big(s+\rmi\pi/2\big)$
  is contained in the square
  \[
  \Big(\!-|s|-\pi/2,|s|+\pi/2\Big)\times(0,\pi).
  \]
  The image under the inverse
  conformal coordinate transformation
  is contained in
  \[
  A:=A\big(\rme^{-|s|-\pi/2},\rme^{|s|+\pi/2}\big)
  =A\Big(\rme^{\ln R-|s|-\pi/2}/R,R/\rme^{\ln R-|s|-\pi/2}\Big).
  \]
  Therefore,
  \[
  \left|
    \frac{\partial u}{\partial s}
    \big(s+\rmi\pi/2\big)
  \right|
  \leq
  \frac{2}{\pi}\sqrt{C}\sqrt{E(v;A)}.
  \]
  Further, we are allowed to apply
  the concentration inequality
  for all $T\geq T_0+\pi/2$.
  Indeed, we get $|s|<\ln R-T_0-\pi/2$
  such that
  \[
  A\subset
  A\big(\rme^{T_0}/R,R/\rme^{T_0}\big).
  \]
  Hence, we obtain
  \[
  \left|
    \frac{\partial u}{\partial s}
    \big(s+\rmi\pi/2\big)
  \right|
  \leq
  \frac{2}{\pi}\sqrt{C}
  \sqrt{c\,\rme^{\big(|s|+\pi/2-\ln R\big)/c}e(w)}.
  \]
  An integration of the inequality over all
  $s\in\big(T-\ln R,\ln R-T\big)$
  using the symmetry of the integrand
  on the right hand side shows
  that the length of the path
  $s\mapsto u\big(s+\rmi\pi/2\big)$
  is bounded by
  \[
  \frac{2}{\pi}\sqrt{C}
  \sqrt{ce(w)}
  4c\,\rme^{\big(\pi/2-T\big)/2c}.
  \]
  Replacing $T_0$ by $T_0+\pi/2$
  completes the proof of the lemma.
\end{proof}

\subsection{Soft rescaling\label{subsec:softrescal}}

Consider the open disc $B_r(z_0)$
of radius $r>0$ in $\Hp$
with center equal to $z_0\in\Hp$
and a sequence
$
w^{\nu}\co
\big(B_r(z_0),B_r(z_0)\cap\R\big)
\ra (K_M,K_L)
$
of holomorphic maps with bounded energy.
Assume that there exists a pointed limiting
holomorphic map
$
w\co
\big(B_r(z_0),B_r(z_0)\cap\R\big)
\ra (K_M,K_L)
$
in the sense that
\begin{itemize}
\item
  $w^{\nu}$ converges to $w$
  in $C^{\infty}_{\loc}$
  on $B_r(z_0)\setminus\{z_0\}$ and
\item
  the limit
  \[
  m_0=
  \lim_{\varepsilon\ra 0}
  \lim_{\nu\ra\infty}
  E\big(w^{\nu};B_{\varepsilon}(z_0)\big),
  \]
  the so-called {\bf mass} at $z_0$,
  exists and is greater or equal to $\hbar$.
\end{itemize}
The aim of this section is to describe
the behavior of the sequence $w^{\nu}$
near the selected bubble point $z_0$.
We will use the so-called soft rescaling technique
that was introduced in \cite{hosa95}.
Denote by $\Sigma$
the closed unit disc $\D=\Hp\cup\infty$
or $\C P^1=\C\cup\infty$
depending on whether $z_0$ is real
or has positive imaginary part.

\begin{lem}
  There exists
  a sequence $\varphi^{\nu}$
  of M\"obius transformations of $\Sigma$,
  a holomorphic map
  $
  v\co
  \big(\D,\partial\D\big)
  \ra (K_M,K_L)
  $, resp.,
  $
  v\co\C P^1\ra K_M
  $, and
  a finite subset $Z=\{z_1,\ldots,z_{\ell}\}$
  of bubble points on $\Sigma\setminus\infty$
  such that
  \begin{itemize}
  \item 
    A subsequence of
    $v^{\nu}:=w^{\nu}\circ\varphi^{\nu}$
    converges to $v$ in $C^{\infty}_{\loc}$
    on $\Sigma\setminus\big(Z\cup\infty\big)$
    and
    $\varphi^{\nu}$ converges to $0$
    in $C^{\infty}_{\loc}$ on $\Sigma\setminus\infty$.
  \item 
    If $v$ is constant, then the following
    {\bf stability condition} is satisfied:
    Either $\#Z\geq2$,
    regardless whether $\Sigma$ is a disc or a sphere,
    or $\#Z<2$, in which case
    $\Sigma$ is the closed unit disc
    and there is precisely one bubble point
    in $Z$,
    which lies in the interior.
  \item 
    For all bubble points $z_j$
    the limit
    \[
    m_j:=
    \lim_{\varepsilon\ra 0}
    \lim_{\nu\ra\infty}
    E\big(v^{\nu};B_{\varepsilon}(z_j)\big),
    \]
    the so-called {\bf mass} at $z_j$,
    exists and is greater or equal to $\hbar$.
  \item 
    Denote by
    \[
    m_{\infty}:=
    \lim_{R\ra\infty}
    \lim_{\nu\ra\infty}
    E\big(v^{\nu};\Sigma\setminus B_R(0)\big)
    \]
    the {\bf mass at infinity}.
    Then
    \[
    \lim_{\nu\ra\infty}E(w^{\nu})
    =m_0+m_{\infty}
    \]
    and
    \[
    m_0=E(v)+m_1+\ldots+m_{\ell}
    \]
    so that {\bf no energy is lost}.
  \item 
    The {\bf bubbles connect}, i.e., $w(0)=v(\infty)$.
  \end{itemize}
\end{lem}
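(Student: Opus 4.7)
The plan is to implement the soft rescaling technique of Hofer-Salamon at the bubble point $z_0$ and then to verify the five assertions by combining it with the concentration and distance-energy inequalities of Lemma~\ref{lem:concentration} and the convergence modulo bubbling result of Section~\ref{subsec:convmodbubb}. After a preliminary M\"obius normalisation I would assume $z_0=0$, taking $\Sigma=\C P^1$ when $z_0$ is interior and $\Sigma=\D=\Hp\cup\infty$ when $z_0\in\R$. For every $\nu$ I would define $\varepsilon^{\nu}>0$ to be the smallest radius satisfying
\[
E\big(w^{\nu};B_{\varepsilon^{\nu}}(z_0)\big)=m_0-\hbar/2;
\]
the hypothesis $m_0\geq\hbar$ together with the $C^{\infty}_{\loc}$-convergence of $w^{\nu}$ off $z_0$ forces $\varepsilon^{\nu}\searrow 0$. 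Setting $\varphi^{\nu}(z)=\varepsilon^{\nu}z$, the rescaled sequence $v^{\nu}:=w^{\nu}\circ\varphi^{\nu}$ is then defined on domains exhausting $\C$ or $\Hp$, and satisfies the rescaling axiom of Section~\ref{subsec:gromovconv} by construction.

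Applying convergence modulo bubbling on each ball $B_R(0)$ followed by a diagonal procedure would produce the limit $v$ on $\Sigma\setminus\infty$ together with a finite bubble set $Z$ of masses $m_j\geq\hbar$; finiteness uses the uniform energy bound. Since $E(v^{\nu};B_1(0))=m_0-\hbar/2$ for every $\nu$, at least this much mass is captured inside $B_1(0)$, so if $v$ is constant then all of it concentrates at points of $Z$. The stability assertion of Section~\ref{subsec:stablemaps} would then follow from the minimality of $\varepsilon^{\nu}$: each unstable configuration (empty $Z$, a single bubble on $\partial\D$, or two bubbles both on $\partial\D$) permits a M\"obius automorphism of $\Sigma$ that recentres the soft rescaling at a strictly smaller radius still capturing mass at least $m_0-\hbar/2$, contradicting the choice of $\varepsilon^{\nu}$.

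For the remaining claims I would work on the annulus $A_{z_0}(\varepsilon^{\nu}R,\varrho)$ with $\varrho$ small and $R$ large. Its energy eventually drops below $\hbar$ because the mass inside $B_{\varepsilon^{\nu}R}(z_0)$ equals $m_0-\hbar/2+o(1)$ while that inside $B_{\varrho}(z_0)$ tends to $m_{\varrho}(z_0)\to m_0$. The concentration inequality of Lemma~\ref{lem:concentration} then forces exponential decay in $T$ of the energy on $A_{z_0}\big(\rme^{T}\varepsilon^{\nu}R,\rme^{-T}\varrho\big)$, and a standard limiting argument in $\nu\to\infty$, $R\to\infty$, $\varrho\to 0$, $T\to\infty$ yields the conservation identities $\lim E(w^{\nu})=m_0+m_{\infty}$ and $m_0=E(v)+m_1+\ldots+m_{\ell}$. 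The distance-energy inequality on the same annulus bounds
\[
\dist\!\big(w^{\nu}(\varrho\rme^{\rmi\theta}),w^{\nu}(\varepsilon^{\nu}R\rme^{\rmi\theta'})\big)\leq c\,\rme^{-T/c}\sqrt{\hbar};
\]
in the same limit the inner value converges to $v(\infty)$ and the outer to $w(0)$, so the bubbles connect.

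The hard part will be the stability step in the boundary case, because the automorphism group of $(\D,\partial\D)$ is only three-dimensional and interacts nontrivially with the distinction between interior and boundary bubble points; each forbidden configuration must be excluded by producing an explicit M\"obius competitor that contradicts the minimality of $\varepsilon^{\nu}$. All remaining steps mirror the classical Hofer-Salamon scheme, carried out inside $\Omega_L$ and, whenever a Stokes-type integration against the primitive $\lambda$ is required, with the exact local symplectic form $\omega_L$ of Section~\ref{subsec:locsympform} replacing $\omega$.
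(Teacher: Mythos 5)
Your overall scheme (soft rescaling, convergence modulo bubbling, concentration and distance--energy inequalities on the intermediate annuli, then the Hofer--Salamon limiting argument for energy conservation and connectedness of the bubbles) is the right one and agrees with the paper everywhere except at the one step you yourself flag as ``the hard part'': the stability condition in the boundary case. That step is a genuine gap, and the mechanism you propose for closing it does not work. You centre the dilation at $z_0=0\in\R$ and define $\varepsilon^{\nu}$ by an energy quota on balls centred at $z_0$. If the energy actually concentrates at interior points $z^{\nu}=x^{\nu}+\rmi y^{\nu}\to0$ at a scale $\delta^{\nu}$ with, say, $\delta^{\nu}\ll y^{\nu}\ll x^{\nu}$, then $\varepsilon^{\nu}\approx|z^{\nu}|$ and the rescaled sequence $v^{\nu}=w^{\nu}(\varepsilon^{\nu}\,\cdot)$ concentrates all of its mass at the single real point $\lim z^{\nu}/\varepsilon^{\nu}\in\partial\D$: a constant limit with exactly one boundary bubble point, which is precisely a forbidden configuration. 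Your exit from this --- a M\"obius automorphism that recentres the rescaling at a strictly smaller radius ``contradicting the choice of $\varepsilon^{\nu}$'' --- is not a contradiction, because $\varepsilon^{\nu}$ is minimal only among radii of balls centred at $z_0$; a smaller ball centred elsewhere capturing the same mass contradicts nothing. (You also misidentify the unstable configurations: two bubble points both on $\partial\D$ is allowed by the lemma, since together with $\infty$ they give three nodal points; the genuinely forbidden cases are $Z=\emptyset$ and a single bubble point on $\partial\D$.)

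The paper resolves this with a different normalisation: the concentration ball is centred at the point $z^{\nu}=x^{\nu}+\rmi y^{\nu}$ where $|\nabla w^{\nu}|$ attains its maximum, $\delta^{\nu}$ is defined by $E\big(w^{\nu};B_{\delta^{\nu}}(z^{\nu})\big)=m_0-\hbar/2$, and one distinguishes two cases according to whether $y^{\nu}/\delta^{\nu}$ stays bounded or tends to infinity. In the bounded case one takes $\varphi^{\nu}(z)=x^{\nu}+\delta^{\nu}z$; if $v$ is constant, the normalisation forces $\rmi\eta=\lim\rmi y^{\nu}/\delta^{\nu}$ to be a bubble point, while the exact quota $m_0-\hbar/2$ on $B_1(\rmi y^{\nu}/\delta^{\nu})$ forces a second bubble point on the circular arc $\partial B_1(\rmi\eta)$, giving $\#Z\geq2$. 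In the unbounded case one takes $\varphi^{\nu}(z)=x^{\nu}+y^{\nu}z$ and shows the limit is constant with the single interior bubble point $\rmi$ --- the allowed exceptional configuration. Without a case distinction of this kind, tied to the position of the gradient maximum relative to the boundary, your induction cannot exclude unstable ghost discs. The remaining items of your outline (finiteness of $Z$, the masses, the annulus argument for $m_0=E(v)+m_1+\ldots+m_{\ell}$ and $\lim E(w^{\nu})=m_0+m_{\infty}$, and the distance--energy argument for $w(0)=v(\infty)$) are consistent with the paper's proof, although the no-energy-loss step in the unbounded case requires an additional rescaled application of the concentration inequality on the annuli $A_{\rmi}(1,\varepsilon/y^{\nu})$ that your sketch does not address.
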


\begin{proof}
  In the case the distinguished bubble point
  $z_0$ does not lie on the real axis the proof
  can be found in \cite[Section 4.7]{mcsa04}.
  In the alternative case we will translate
  the arguments from \cite[Section 3.2.1]{fraun08}
  to the present situation.
  The modifications are caused by
  the totally real boundary condition.
  
  We can assume that $z_0=0$ and $r=2$.
  The sequence
  \[
  z^{\nu}=x^{\nu}+\rmi y^{\nu}
  \]
  of points on which $|\nabla w^{\nu}|$
  attains its maximum on $B_1(0)$
  converges to the origin.
  For $\nu$ sufficiently large we find $\delta^{\nu}>0$
  such that
  \[
  E\big(w^{\nu};B_{\delta^{\nu}}(z^{\nu})\big)
  =m_0-\hbar/2.
  \]
  By the definition of $m_0$
  the sequence $\delta^{\nu}$ must converge to $0$.
  We will consider two cases.
  Either a subsequence of $y^{\nu}/\delta^{\nu}$
  converges to a real number or tends to infinity.
  
  We start with the first case.
  We can assume that $y^{\nu}/\delta^{\nu}$
  converges to $\eta\geq0$.
  Setting
  \[
  \varphi^{\nu}(z):=
  x^{\nu}+\delta^{\nu}z
  \]
  for all $\nu$ we obtain a sequence of
  M\"obius transformations,
  which preserve the upper half-plane.
  The sequence $\varphi^{\nu}$ tends to $0$
  in $C^{\infty}_{\loc}$.
  With Section \ref{subsec:convmodbubb}
  the sequence
  \[
  v^{\nu}:=w^{\nu}\circ\varphi^{\nu}
  \]
  converges modulo bubbling in the complement
  $\Hp\setminus Z$ of finitely many bubble points
  $Z=\{z_1,\ldots,z_{\ell}\}$ to $v$.
  The masses exists at all bubble points $z_j$.
  By Section \ref{subsec:remofsing}
  $v$ extends to a holomorphic disc.
  
  By construction
  \[
  E\Big(v^{\nu};B_1\big(\rmi y^{\nu}/\delta^{\nu}\big)\Big)
  =m_0-\hbar/2.
  \]
  The definition of $m_0$ implies
  that for $\varepsilon>0$ sufficiently small
  and for $\nu\in\N$ sufficiently large
  \[
  E\Big(v^{\nu};B_{\varepsilon/\delta^{\nu}}\big(-x^{\nu}/\delta^{\nu}\big)\Big)
  \leq m_0+\hbar/4.
  \]
  Because the masses are greater or equal than $\hbar$
  the bubble points $z_1,\ldots,z_{\ell}$ are contained
  in the closure of $B_1(\rmi\eta)$.
  Moreover, because the supremum of $|\nabla v^{\nu}|$ over
  $B_{1/\delta^{\nu}}\big(-x^{\nu}/\delta^{\nu}\big)$
  is attained at $\rmi y^{\nu}/\delta^{\nu}$
  the point $\rmi\eta$ is a bubble point
  provided $Z$ is not empty.
  
  We claim that
  \[
  \lim_{R\ra\infty}
  \lim_{\nu\ra\infty}
  E\Big(v^{\nu};B_R\big(\rmi y^{\nu}/\delta^{\nu}\big)\Big)
  =m_0.
  \]
  Before we prove this we will draw its consequences.
  First of all it implies that for $s>1$ the sum
  \[
  E\big(v;\Hp\setminus B_s(\rmi\eta)\big)+
  \lim_{\nu\ra\infty}
  E\Big(v^{\nu};B_s\big(\rmi y^{\nu}/\delta^{\nu}\big)\Big)
  \]
  is equal to $m_0$.
  Cutting out small neighbourhoods of the bubble points
  $z_1,\ldots,z_{\ell}$ the limit equals the sum of the masses.
  This shows that
  \[
  E(v)+m_1+\ldots+m_{\ell}=m_0.
  \]
  In particular, $\rmi\eta$ is a bubble point
  if $v$ is constant.
  A similar argument and
  conformal invariance of the energy imply
  \[
  \lim_{\nu\ra\infty}E(w^{\nu})=
  E(v)+m_1+\ldots+m_{\ell}+m_{\infty}.
  \]
  In order to verify the stability condition
  observe that
  \[
  \lim_{\nu\ra\infty}
  E\big(v^{\nu};B_R(\rmi\eta)\big)
  \]
  is independent of $R>1$
  provided that $v$ is constant.
  Hence,
  \[
  \lim_{\nu\ra\infty}
  E\Big(v^{\nu};B_R\big(\rmi y^{\nu}/\delta^{\nu}\big)\Big)
  =m_0
  \]
  for all $R>1$.
  Because
  \[
  E\Big(v^{\nu};B_1\big(\rmi y^{\nu}/\delta^{\nu}\big)\Big)
  =m_0-\hbar/2
  \]
  for all $\nu$ and in view of the above cutting argument
  for the bubble points,
  which are contained in the closure of $B_1(\rmi\eta)$,
  there must be a bubble point on the round boundary arc
  of $B_1(\rmi\eta)$.
  In other words, $\#Z\geq2$ provided $v$ is constant.
  
  We prove the above claim,
  namely that
  \[
  \lim_{R\ra\infty}
  \lim_{\nu\ra\infty}
  E\big(w^{\nu};B_{R\delta^{\nu}}(z^{\nu})\big)
  =m_0.
  \]
  Observe that for all $R\geq1$
  \[
  m_0-\hbar/2\leq
  \lim_{\nu\ra\infty}
  E\big(w^{\nu};B_{R\delta^{\nu}}(z^{\nu})\big)
  \leq m_0.
  \]
  Arguing by contradiction we suppose that
  there exists $\mu>0$ such that for all $R\geq1$
  \[
  \lim_{\nu\ra\infty}
  E\big(w^{\nu};B_{R\delta^{\nu}}(z^{\nu})\big)
  \leq m_0-\mu.
  \]
  In order to lead this to a contradiction
  we choose a sequence $\varepsilon^{\nu}\searrow 0$
  such that
  \[
  \lim_{\nu\ra\infty}
  E\big(w^{\nu};B_{\varepsilon^{\nu}}(z^{\nu})\big)
  =m_0,
  \]
  see \cite[p.\ 234]{fraun08}.
  The rescaling argument from \cite[p.\ 234]{fraun08}
  shows that
  \[
  \lim_{\nu\ra\infty}
  E\Big(w^{\nu};A_{z^{\nu}}\big(\rme^{-T}\varepsilon^{\nu},\varepsilon^{\nu}\big)\Big)
  =0
  \]
  for all $T>0$.
  Hence,
  \[
  m_0=
  \lim_{\nu\ra\infty}
  E\big(w^{\nu};B_{\rme^T\delta^{\nu}}(z^{\nu})\big)+
  \lim_{\nu\ra\infty}
  E\Big(w^{\nu};A_{z^{\nu}}\big(\rme^T\delta^{\nu},\rme^{-T}\varepsilon^{\nu}\big)\Big).
  \]
  Moreover, by the definitions of $\delta^{\nu}$ and $\varepsilon^{\nu}$
  we have
  \[
  \lim_{\nu\ra\infty}
  E\big(w^{\nu};A_{z^{\nu}}(\delta^{\nu},\varepsilon^{\nu})\big)
  =\hbar/2,
  \]
  as well as,
  using the contradiction assumption,
  that the ratio $\varepsilon^{\nu}/\delta^{\nu}$
  tends to infinity.
  The concentration inequality of Lemma \ref{lem:concentration}
  implies that
  \[
  \lim_{\nu\ra\infty}
  E\Big(w^{\nu};A_{z^{\nu}}\big(\rme^T\delta^{\nu},\rme^{-T}\varepsilon^{\nu}\big)\Big)
  \leq c\rme^{-T/c}\hbar/2
  \]
  for all $T$ sufficiently large.
  Combining this with the above limit-decomposition of $m_0$
  we obtain
  \[
  m_0\leq m_0-\mu+c\rme^{-T/c}\hbar/2.
  \]
  Letting $T$ tend to infinity this yields the desired contradiction.
  
  A similar argumentation to \cite[Lemma 3.6]{fraun08}
  using the distance-energy inequality of Lemma \ref{lem:concentration}
  shows that the bubbles $w$ and $v$ connect.
  This finishes the proof of the first case.
  
  For the second we assume that $y^{\nu}/\delta^{\nu}$
  tends to infinity.
  We set
  \[
  \varphi^{\nu}(z):=
  x^{\nu}+y^{\nu}z
  \]
  and obtain a sequence of
  M\"obius transformations preserving the upper half-plane
  that converges to zero in $C^{\infty}_{\loc}$.
  We consider the sequence of holomorphic maps
  obtained by
  \[
  v^{\nu}:=w^{\nu}\circ\varphi^{\nu}.
  \]
  We claim that $v^{\nu}$ converges
  modulo bubbling on $\Hp\setminus\rmi$
  to a constant map.
  Notice, that only $\rmi$ can be a bubble point.
  Indeed, we have
  \[
  E\big(v^{\nu};B_{\delta^{\nu}/y^{\nu}}(\rmi)\big)
  =m_0-\hbar/2
  \]
  for all $\nu$.
  By the definition of $m_0$
  we find $\varepsilon>0$ sufficiently small
  and $\nu$ sufficiently large such that  
  \[
  E\big(w^{\nu};B_{\varepsilon}(0)\big)
  \leq
  m_0+\hbar/4.
  \]
  The term on the left equals
  the energy of $v^{\nu}$ on the distance disc
  \[
  B_{\varepsilon/y^{\nu}}\big(-x^{\nu}/y^{\nu}\big).
  \]
  Hence, for all $\nu$ sufficiently large
  the energy of $v^{\nu}$ on
  \[
  D^{\nu}:=
  B_{\varepsilon/y^{\nu}}\big(-x^{\nu}/y^{\nu}\big)
  \setminus
  B_{\delta^{\nu}/y^{\nu}}(\rmi)
  \]
  is less than or equal to $3\hbar/4$.
  Notice,
  that the real boundary points
  $-x^{\nu}/y^{\nu}\pm\varepsilon/y^{\nu}$
  of $B_{\varepsilon/y^{\nu}}\big(-x^{\nu}/y^{\nu}\big)$
  are of different sign
  and unbounded
  as $x^{\nu}$ and $y^{\nu}$ tend to zero.
  Hence, $D^{\nu}$ approaches all of $\Hp\setminus\rmi$.
  Because the mass of each bubble point
  is greater or equal to $\hbar$
  there is no bubble point on $\Hp\setminus\rmi$.
  Moreover,
  for all $R>1$ we find $\nu$ such that
  $A_{\rmi}(1/R,R)$ is contained in the domain $D^{\nu}$.
  Hence, taking the limit for $\nu\ra\infty$ we get 
  \[
  E\big(v;A_{\rmi}(1/R,R)\big)\leq 3\hbar/4
  \]
  for all $R>1$.
  Taking $R\ra\infty$ we get that $E(v;\Hp)$
  is bounded by $3\hbar/4$.
  The mean value inequality as in
  Section \ref{subsec:meanvalineq}
  implies that $v$ must be constant.
  Finally, suppose that $v^{\nu}$ converges
  in $C^{\infty}_{\loc}$ on the whole
  closed upper half-plane.
  Then the energy $E\big(v^{\nu};B_1(\rmi)\big)$,
  which is bounded by
  \[
  E\big(v^{\nu};B_{\delta^{\nu}/y^{\nu}}(\rmi)\big)
  =m_0-\hbar/2
  \]
  from below, would trend to zero.
  But this is not possible.
  Consequently,
  $v^{\nu}$ converges to a constant map
  on $\Hp\setminus\rmi$
  and $\rmi$ is a bubble point.
  
  Denote the mass of $v^{\nu}$ at $\rmi$ by $m_1$.
  Because
  \[
  E\big(v^{\nu};B_{\varepsilon}(\rmi)\big)
  \geq
  E\big(v^{\nu};B_{\delta^{\nu}/y^{\nu}}(\rmi)\big)
  \]
  for sufficiently large $\nu$
  we see taking the limits $\nu\ra\infty$ and $\varepsilon\ra 0$
  successively that
  \[
  m_1\geq m_0-\hbar/2.
  \]
  We claim that $m_0=m_1$ so that no energy gets lost.
  First of all observe that by the definition of $m_0$
  \[
  \lim_{\varepsilon\ra 0}
  \lim_{\nu\ra\infty}
  E\big(w^{\nu};B_{\varepsilon}(z^{\nu})\big)
  =m_0.
  \]
  With the decomposition
  \[
  E\big(v^{\nu};B_{\varepsilon}(\rmi)\big)+
  E\Big(v^{\nu};A_{\rmi}(\varepsilon,\varepsilon/y^{\nu})\Big)
  \]
  of
  \[
  E\big(v^{\nu};B_{\varepsilon/y^{\nu}}(\rmi)\big)=
  E\big(w^{\nu};B_{\varepsilon}(z^{\nu})\big)
  \]
  this implies that
  \[
  m_0=m_1+
  \lim_{\varepsilon\ra 0}
  \lim_{\nu\ra\infty}
  E\Big(v^{\nu};A_{\rmi}(\varepsilon,\varepsilon/y^{\nu})\Big).
  \]
  In particular, because $m_0-m_1\leq\hbar/2$
  the double limes term
  is bounded by $\hbar/2$.
  Hence, there exists $\varepsilon_0\in(0,1/4)$
  such that for all $\varepsilon\in (0,2\varepsilon_0]$
  there exists $\nu_0\in\N$
  such that for all $\nu\geq\nu_0$
  we have $4\leq1/y^{\nu}$
  and
  \[
  E\Big(v^{\nu};A_{\rmi}(\varepsilon,\varepsilon/y^{\nu})\Big)
  \leq 2\hbar/3.
  \]
  Notice, that
  \[
  \lim_{\nu\ra\infty}
  E\big(v^{\nu};A_{\rmi}(\varepsilon,1)\big)
  =0
  \]
  for all $\varepsilon\in (0,1)$,
  because $v^{\nu}$ converges uniformly
  on $A_{\rmi}(\varepsilon,1)$
  to the constant map.
  This implies
  \[
  \lim_{\varepsilon\ra 0}
  \lim_{\nu\ra\infty}
  E\big(v^{\nu};A_{\rmi}(\varepsilon,1)\big)
  =0.
  \]
  In other words it suffices to show that
  \[
  \lim_{\varepsilon\ra 0}
  \lim_{\nu\ra\infty}
  E\Big(v^{\nu};A_{\rmi}(1,\varepsilon/y^{\nu})\Big)
  =0.
  \]
  In order to do so observe
  that for all $\varepsilon\in (0,2\varepsilon_0]$
  the annuli
  $A_{\rmi}(1,\varepsilon/y^{\nu})$
  support energy $E(v^{\nu};\,.\,)\leq 2\hbar/3$
  for all $\nu\geq\nu_0$
  and that the distance discs
  $B_{r/2}\big(\rmi+r\rme^{\rmi\theta})$
  are contained in
  $A_{\rmi}(2\varepsilon_0,2\varepsilon_0/y^{\nu})$
  for all $r\in[1,\varepsilon_0/y^{\nu}]$.
  Therefore,
  as in the first part of the proof of
  the concentration inequality of
  Lemma \ref{lem:concentration},
  again invoking the mean value inequality
  and the conventions about $\hbar$
  from Section \ref{subsec:convention},
  we get
  \[
  v^{\nu}\Big(A_{\rmi}(1,\varepsilon/y^{\nu})\Big)
  \subset\Omega_L
  \]
  for all $\varepsilon\in (0,\varepsilon_0)$,
  because both boundary components of
  $A_{\rmi}(1,\varepsilon/y^{\nu})$
  intersect the real axis.
  Hence, we are allowed to work
  with the symplectic form $\omega_L=\rmd\lambda$
  and can proceed as in the second part of the proof of
  the concentration inequality.
  Consider the rescaled maps
  \[
  u^{\nu}:=v^{\nu}\circ\sqrt{\varepsilon/y^{\nu}}
  \]
  that support energy less than $\hbar$ on
  \[
  A_{\rmi\sqrt{y^{\nu}/\varepsilon}}
  \Big(\sqrt{y^{\nu}/\varepsilon},\sqrt{\varepsilon/y^{\nu}}\,\Big).
  \]
  With $\rme^{-T}=\sqrt{\varepsilon/\varepsilon_0}$
  and the concentration inequality we get
  \[
  E\Big(u^{\nu};A\big(\rme^T\sqrt{y^{\nu}/\varepsilon_0}\,,\rme^{-T}/\sqrt{y^{\nu}/\varepsilon_0}\,\big)\Big)
  \leq c\,\rme^{-T/c}\hbar.
  \]
  Notice, that the dependence of the center
  $\rme^T\sqrt{y^{\nu}/\varepsilon_0}$ of the annulus on $T$
  does not affect the inequality
  because the proof uses logarithmic
  polar coordinates around $\rme^T\sqrt{y^{\nu}/\varepsilon_0}$.
  Undoing the rescaling we get that there exists
  a positive constant $c_2$ such that
  \[
  E\Big(v^{\nu};A_{\rmi}(1,\varepsilon/y^{\nu})\Big)
  \leq c_2\big(\sqrt{\varepsilon}\,\big)^{1/c}  
  \]
  for all $\nu$ sufficiently large.
  Consequently,
  \[
  \lim_{\nu\ra\infty}
  E\Big(v^{\nu};A_{\rmi}(1,\varepsilon/y^{\nu})\Big)
  \leq c_2\big(\sqrt{\varepsilon}\,\big)^{1/c}.
  \]
  Hence, $m_0=m_1$ follows letting $\varepsilon$
  tend to zero.
  This finishes the second part
  because the proof that the bubbles
  connect is the same as in \cite[Lemma 3.6]{fraun08}.
  This is due to the distance-energy inequality,
  see Lemma \ref{lem:concentration}.
  The proof is complete.
\end{proof}


\section{Generalizations\label{sec:general}}

In this section we collect some generalizations
of Theorem \ref{mainthm}.

\subsection{Variation of almost complex structures}

The theorem continues to hold if we allow
the almost complex structure to vary.
One can consider
a sequence of $J^{\nu}$-holomorphic discs
as well,
where $J^{\nu}$ is a
sequence of almost complex structures
that converges in $C^{\infty}_{\loc}$
to $J$.

\subsection{Uniqueness}

The limit stable map of a Gromov converging sequence
of holomorphic maps is unique up to equivalence.
A description can be found in
\cite[Theorem 3.4]{fraun08}.
The images of the Gromov converging holomorphic maps
converge in the sense of Hausdorff
to the image of the limit stable map,
see \cite[Proposition 3.2]{fraun08}
or \cite[Theorem A.1]{hosa95}.

\subsection{Marked stable maps}

For bounded sequences of stable maps
with bounded energy the notion of Gromov convergence
can be defined as in \cite[Section 4]{fraun08}.
The appearance of marked points and
the variation of almost complex structures are allowed.
The analog of Theorem \ref{mainthm}
for marked stable maps of genus zero
can be obtained as in \cite[Section 4]{fraun08}
with the modifications of the present work
caused by the totally real boundary condition.
In other words,
all results from \cite{fraun08} carry over
to the situation described in the introduction.


\begin{ack}
  We thank Peter Albers, Fr\'ed\'eric Bourgeois, Hansj\"org Geiges, and
  Klaus Niederkr\"uger
  who encouraged us to write this article.
  We thank Urs Fuchs who had explained us his approach
  to Gromov compactness of his thesis \cite{ursfu14}.
\end{ack}


\end{document}